\documentclass[12pt]{amsart}
\pdfoutput=1

\usepackage{graphicx}
\usepackage{enumerate}
\usepackage{tikz}
\usepackage{wrapfig}
\usepackage{pinlabel}


\newtheorem{theorem}{Theorem}[section]

\newtheorem{corollary}[theorem]{Corollary}

\theoremstyle{definition}

\theoremstyle{remark}
\newtheorem{remark}[theorem]{Remark}

\numberwithin{equation}{section}


\newcommand{\storus}{S^1 \times D^2}

\newcommand{\G}{\mathcal{G}}
\newcommand{\A}{\mathcal{A}}

\newcommand{\h}[1]{\text{H}_1(#1)}

\newcommand{\even}{Y_{\A,0}}
\newcommand{\odd}{Y_{\A,1}}

\begin{document}

\title{On Krebes' tangle}
\author{Susan M. Abernathy}
\address{Mathematics Department\\
Louisiana State University\\
Baton Rouge, Louisiana}
\email{sabern1@tigers.lsu.edu}

\date{}


\begin{abstract}
A genus-$1$ tangle $\G$ is an arc properly embedded in a standardly embedded solid torus $S$ in the 3-sphere. We say that a genus-$1$ tangle embeds in a knot $K \subseteq S^3$ if the tangle can be completed by adding an arc exterior to the solid torus to form the knot $K$.  We call $K$ a closure of $\G$.  An obstruction to embedding a genus-1 tangle $\G$ in a knot is given by torsion in the homology of branched covers of $S$ branched over $\G$.  We examine a particular example $\A$ of a genus-1 tangle, given by Krebes, and consider its two double-branched covers.  Using this homological obstruction, we show that any closure of $\A$ obtained via an arc which passes through the hole of $S$ an odd number of times must have determinant divisible by three.  A resulting corollary is that if $\A$ embeds in the unknot, then the arc which completes $\A$ to the unknot must pass through the hole of $S$ an even number of times.
\end{abstract}

\keywords{Tangle, knot, branched cover, determinant}
\subjclass[2010]{57M25, 57M12}


\maketitle

\section{Introduction}
\label{intro}
In this paper, we outline a method for computing the homology of the two double-branched covers of any properly embedded arc in the solid torus.  In particular, we use this method to partially answer a question posed by Krebes in \cite{kr}.

Let $S$ be a standardly embedded solid torus $\storus \subset S^3$.  Then a genus-$1$ tangle is a properly embedded arc in $S$.  Just as we may discuss embedding ordinary tangles in $B^3$ into knots and links (see \cite{kr}, \cite{psw}, and \cite{rub}), we may consider embedding genus-1 tangles in knots.  We say that a genus-1 tangle $\G$ embeds in a knot $K$ if $\G$ can be completed by an arc exterior to $S$ to form the knot $K$; that is, there exists some arc in $S^3 -Int(S)$ such that upon gluing this arc to $\G$ along their boundary points, we have a knot in $S^3$ which is isotopic to $K$.  We say that $K$ is a closure of $\G$.

Let $l$ denote a longitude for $S$ which is contained in $\partial S$ and avoids the genus-1 tangle.  A closure $K$ of $\G$ is called odd (respectively, even) with respect to $l$ if $lk(K,l)$ is odd (respectively, even).  If $l$ is chosen to be the longitude which circles the central hole of $S$ as in Fig. \ref{krebestangle}, and we span the longitude $l$ by a disk $\Delta$ filling the hole, then $lk(K,l)$ is the number of transverse intersections counted with sign of the arc which completes $\G$ to $K$ with $\Delta$. Thus, in this case we can say more colloquially that $K$ is an odd (respectively, even) closure with respect to $l$ if the arc which completes $\G$ to $K$ passes through the hole of $S$ an odd (respectively, even) number of times.

\begin{figure}
\labellist
\small\hair 2pt
\pinlabel $l$ at 377 185
\endlabellist
\begin{center} \includegraphics[height=1in]{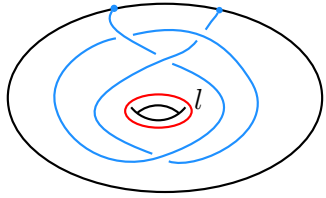}\end{center}
\caption{Krebes' genus-1 tangle $\mathcal{A}$ in $S$ together with a specified longitude $l$.}\label{krebestangle}
\end{figure}

In \cite{kr}, Krebes asks whether the genus-1 tangle $\A$ given in Fig. \ref{krebestangle} embeds in the unknot.  Using the following results from \cite{rub}, we are able to partially answer this question.  Note that when discussing this example, we always use the longitude $l$ drawn in Fig. \ref{krebestangle}. 

\begin{theorem}[Ruberman]\label{ruberman}
Suppose $M$ is an orientable 3-manifold with connected boundary, and $i:M \hookrightarrow N$ where $N$ is an orientable 3-manifold with $H_1(N)$ torsion. Then the inclusion map $i_\ast$ induces an injection of the torsion subgroup $T_1(M)$ of $\h{M}$ into $H_1(N)$.
\end{theorem}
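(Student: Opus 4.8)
The application has $N = S^3$, and since the noncompact and bounded cases reduce---with some care---to the closed one, I would describe the argument for $N$ a closed connected oriented $3$-manifold into which $M$ embeds via $i$. After an isotopy, arrange that $i(M)$ is a codimension-$0$ submanifold carrying a collar on its boundary, set $W = \overline{N \setminus i(M)}$, and identify $\partial i(M) = \partial W$ with the closed connected surface $\Sigma := \partial M$, so that $N = i(M) \cup_\Sigma W$. The engine of the proof is the Mayer--Vietoris sequence of this splitting, together with Poincar\'e duality on $N$.

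Here is the plan. Since $N$ is closed, connected, and oriented, Poincar\'e duality and the universal coefficient theorem give $H_2(N) \cong H^1(N) \cong \operatorname{Hom}(H_1(N),\Z)$, which vanishes because $H_1(N)$ is torsion. Hence, in the Mayer--Vietoris sequence
\[
0 = H_2(N) \longrightarrow H_1(\Sigma) \xrightarrow{\phi} H_1(M)\oplus H_1(W) \longrightarrow H_1(N),
\]
the map $\phi$, given by $\phi(y) = (j_{M\ast}y,\, -\,j_{W\ast}y)$ for the inclusions $j_M\colon \Sigma \hookrightarrow M$ and $j_W\colon \Sigma \hookrightarrow W$, is injective. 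Now take $x \in T_1(M)$ with $i_\ast x = 0$ in $H_1(N)$. Then $(x,0) \in H_1(M)\oplus H_1(W)$ maps to $i_\ast x = 0$, so by exactness $(x,0) = \phi(y)$ for some $y \in H_1(\Sigma)$. If $y \neq 0$, then---$H_1(\Sigma)$ being free abelian, since $\Sigma$ is a closed surface---the element $y$ has infinite order, hence so does $\phi(y) = (x,0)$ by injectivity of $\phi$; but $x$ is torsion, a contradiction. So $y = 0$, whence $x = j_{M\ast}y = 0$. This is exactly injectivity of $i_\ast$ on $T_1(M)$.

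I expect the one genuine obstacle to be making the reduction to the closed case precise: for noncompact $N$ one wants a compact piece of $N$ carrying $i$ and the classes at hand, and for $\partial N \neq \emptyset$ one must run the argument with the pair $(N,\partial N)$ and Lefschetz duality, still using ``$H_1(N)$ torsion'' to kill the obstructing homology group. (The hypothesis that $\partial M$ is connected enters only for convenience: it makes $\widetilde{H}_0(\Sigma) = 0$, so the Mayer--Vietoris sequence takes exactly the displayed shape; in general one carries along the extra $\widetilde{H}_0$ terms, which contribute no torsion.) Everything else---the isotopy, the collar, the Mayer--Vietoris sequence, and the duality computation---is routine. An alternative is to argue through the torsion linking form, which is nonsingular on the first homology of a closed oriented $3$-manifold with torsion $H_1$ and restricts compatibly along $i_\ast$; but coaxing the linking pairing on the bounded manifold $M$ to behave requires essentially the same input, so I would favour the Mayer--Vietoris route above.
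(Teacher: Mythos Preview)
Your Mayer--Vietoris argument is correct, and the closed case you spell out is exactly what is needed for the application in the paper (where $N=S^3$). However, there is no proof in the paper to compare against: Theorem~\ref{ruberman} is quoted as a result of Ruberman and cited from \cite{rub}, with no argument supplied. The only hint the paper gives is the remark, immediately after, that Corollary~\ref{rubermancor} (the special case $H_1(N)=0$) ``can easily be proved directly using a Meyer-Vietoris sequence''---which is precisely your method restricted to that case. So your sketch simply fills in what the paper defers to the reference, and does so along the route the paper itself gestures toward.
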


This theorem has a useful corollary which can easily be proved directly using a Meyer-Vietoris sequence.

\begin{corollary}[Ruberman]\label{rubermancor}
Let $M$ and $N$ be as in Theorem \ref{ruberman} but suppose $H_1(N) = 0$.  Then $H_1(M)$ is torsion-free.
\end{corollary}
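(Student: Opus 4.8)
The plan is to carry out the direct Mayer--Vietoris argument mentioned after the statement; the underlying point is that $H_1$ of a closed orientable surface is free abelian. Write $\Sigma=\partial M$, a connected closed orientable surface (connected by hypothesis, orientable as the boundary of the orientable manifold $M$), and set $W=\overline{N\setminus M}$, so that $N=M\cup_{\Sigma}W$ is a decomposition to which Mayer--Vietoris applies, with $M\cap W$ deformation retracting onto $\Sigma$. Throughout I take all manifolds compact and connected and $N$ closed, which is the setting of the application to double-branched covers; the degenerate case $\partial M=\emptyset$ forces $M=N$ and is trivial.

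The first step is to observe that $H_2(N)=0$. Since $N$ is a closed orientable $3$-manifold, Poincar\'e duality gives $H_2(N)\cong H^1(N)$, and the universal coefficient theorem gives $H^1(N)\cong\operatorname{Hom}(H_1(N),\mathbb{Z})$ because $H_0(N)\cong\mathbb{Z}$ is free; as $H_1(N)=0$ this group is $0$. (In other words, $N$ is a $\mathbb{Z}$-homology $3$-sphere.)

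The second step is to read off the conclusion from the Mayer--Vietoris sequence of $N=M\cup_{\Sigma}W$. Using $H_2(N)=H_1(N)=0$, its segment
\[
H_2(N)\longrightarrow H_1(\Sigma)\longrightarrow H_1(M)\oplus H_1(W)\longrightarrow H_1(N)
\]
collapses to $0\to H_1(\Sigma)\to H_1(M)\oplus H_1(W)\to 0$, so the inclusions induce an isomorphism $H_1(\Sigma)\cong H_1(M)\oplus H_1(W)$. Since $\Sigma$ is a closed orientable surface, $H_1(\Sigma)$ is free abelian of finite rank, and a direct summand of such a group is again free abelian; hence $H_1(M)$ is free abelian, and in particular torsion-free.

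The subtle step --- the hardest part, such as it is --- is the first one: what the argument really needs is the vanishing of $H_2(N)$, not merely of $H_1(N)$, in order for the Mayer--Vietoris map out of $H_1(\Sigma)$ to be injective, and this is exactly where the closedness of $N$ is used (more generally one would want $\partial N$ empty or connected). Without it, the sequence would only exhibit $H_1(M)\oplus H_1(W)$ as a quotient of the free group $H_1(\Sigma)$, which need not be torsion-free. For the present purpose this costs nothing, since the double-branched covers under consideration are closed $3$-manifolds; note also that this proof does not use Theorem \ref{ruberman}.
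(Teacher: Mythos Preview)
Your argument is correct and is precisely the direct Mayer--Vietoris proof that the paper alludes to but does not write out; your extra hypothesis that $N$ be closed is harmless here (the branched covers in the application are closed) and is honestly flagged. Note that the corollary also follows in one line from Theorem~\ref{ruberman} itself, since $T_1(M)$ injects into $H_1(N)=0$; your route has the advantage of being independent of that theorem.
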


One obtains an obstruction to embedding genus-1 tangles in knots from Theorem \ref{ruberman} by applying the result to branched covers of $S$ branched over genus-1 tangles.

For any genus-1 tangle $\G$, the homology $\h{S-\G}$ is free abelian on the two generators given by the meridian $m$ of $\G$ and the longitude $l$ of $S$.  For a given $n$, each $n$-fold cover of $S$ branched over $\G$ is associated to a homomorphism $\varphi: \h{S-\G} \rightarrow \mathbb{Z}_n$ which maps $m$ to one. The remaining generator $l$ of $\h{S-\G}$ may be sent to any element of $\mathbb{Z}_n$; we use $\varphi (l)$ to index the $n$-fold branched covers.  So, $Y_{\G,i}$ denotes the $n$-fold cover of $S$ branched over $\G$ associated to the homomorphism $\varphi$ which maps $l$ to $i$.

If a genus-1 tangle $\G$ embeds in a knot $K$, then the $n$-fold cover $X_K$ of $S^3$ branched over $K$ restricts to some $n$-fold cover $Y_{\G,i}$ of $S$ branched over $\G$.  In this case, we say that the closure $K$ induces the cover $Y_{\G,i}$. Then according to Theorem \ref{ruberman}, the torsion subgroup $T_1(Y_{\G,i})$ of $\h{Y_{\G,i}}$ injects into $H_1(X_K)$.

Note that if $K$ is the unknot, then $X_K$ is $S^3$ and according to Corollary \ref{rubermancor}, the torsion subgroup $T_1(Y_{\G,i})$ is trivial. Thus, if there is any torsion in the homology of $Y_{\G,i}$, then any closure of $\G$ which induces the cover $Y_{\G,i}$ is not the unknot.

Ruberman considered using Theorem \ref{ruberman} to study Krebes' question about the genus-1 tangle $\A$. He found that ``it seems that the homology of all the cyclic covers of the solid torus, branched along this arc, is torsion-free" (see Section 5 of \cite{rub}).  However, our detailed computation in Sections 2 and 3 shows that the two-fold branched cover $Y_{\A,1}$ does have torsion in its homology, and we prove the following results.

\begin{theorem}\label{maintheorem}
If a knot $K$ in $S^3$ is an odd closure of $\A$, then $\det(K)$ is divisible by $3$.
\end{theorem}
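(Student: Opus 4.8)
The plan is to connect the determinant of a knot $K$ to the homology of the double-branched cover $X_K$, and then transport a torsion statement from the tangle cover $Y_{\A,1}$ into $X_K$ via Ruberman's theorem. Recall the classical fact that $\det(K) = |H_1(X_K)|$ when $X_K$ is the double-branched cover of $S^3$ over $K$ (and $\det(K)=0$ exactly when $H_1(X_K)$ is infinite). So it suffices to show that whenever $K$ is an odd closure of $\A$, the group $H_1(X_K)$ contains an element of order $3$ (or is infinite, in which case we would want $3 \mid \det(K)$ to be read as $3\mid 0$, which holds trivially).

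First I would pin down which branched cover of $S$ the closure induces. Since $K$ is an odd closure of $\A$ with respect to the longitude $l$, we have $lk(K,l)$ odd, which means that in the double cover the relevant homomorphism $\varphi:\h{S-\A}\to\mathbb{Z}_2$ sends the meridian $m$ to $1$ and sends $l$ to $lk(K,l)\bmod 2 = 1$. Hence the restriction of $X_K$ to $S$ is exactly the cover indexed by $1$, namely $Y_{\A,1}$; that is, $K$ induces the cover $\odd$. This is the bookkeeping step that uses the parity hypothesis in an essential way, and it is where the hypothesis ``$K$ is odd'' gets converted into ``$K$ induces $\odd$.''

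Next I would invoke the computation promised in Sections 2 and 3: the homology $\h{\odd}$ has torsion, and — this is the key numerical input I expect the later sections to supply — that torsion subgroup $T_1(\odd)$ contains an element of order $3$, i.e. $\mathbb{Z}_3$ is a summand (or at least a subgroup) of $T_1(\odd)$. Granting this, $Y_{\A,1}$ is an orientable $3$-manifold with connected boundary (the boundary being a double cover of $\partial S = T^2$, hence again connected since $\varphi$ is nontrivial on $\partial S$), and it includes into the closed orientable $3$-manifold $X_K$, whose first homology $H_1(X_K)$ is torsion precisely when $\det(K)\neq 0$. Applying Theorem \ref{ruberman} to the inclusion $\odd \hookrightarrow X_K$ gives an injection $T_1(\odd)\hookrightarrow H_1(X_K)$, so $H_1(X_K)$ contains an element of order $3$, forcing $3\mid |H_1(X_K)| = \det(K)$. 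In the remaining case $\det(K)=0$ the divisibility is automatic, so the theorem follows in all cases.

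The main obstacle is the homological computation of $\h{\odd}$, which is exactly the content deferred to Sections 2 and 3; everything in this section is the relatively soft packaging around it. A secondary point requiring care is verifying the hypotheses of Theorem \ref{ruberman} are genuinely met: that $\odd$ is orientable with \emph{connected} boundary, and that the inclusion into $X_K$ is a map to a manifold with torsion $H_1$ (handled by splitting off the $\det(K)=0$ case). One should also be slightly careful that the relevant torsion element of $\odd$ has order exactly divisible by $3$ rather than, say, order a power of $3$ that does not give the clean divisibility — but since any element of order divisible by $3$ in a finite abelian group forces $3$ to divide the group's order, this is not actually an issue.
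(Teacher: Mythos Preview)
Your proposal is correct and follows essentially the same argument as the paper: an odd closure induces the cover $\odd$, the computation of Sections~2 and~3 gives $T_1(\odd)\cong\mathbb{Z}_3$, and Theorem~\ref{ruberman} then injects this into $H_1(X_K)$, forcing $3\mid\det(K)$. Your write-up is more explicit about verifying the hypotheses of Theorem~\ref{ruberman} and about the (vacuous, for knots) case $\det(K)=0$; one small imprecision is that $\partial(\odd)$ is a \emph{branched} double cover of $\partial S$ (over the two endpoints of $\A$), hence a genus-$2$ surface rather than a torus, but it is indeed connected so your conclusion stands.
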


\begin{corollary}\label{maincor}
If $\A$ embeds in the unknot, then the unknot is an even closure of $\A$.
\end{corollary}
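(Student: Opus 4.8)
The plan is to obtain Corollary \ref{maincor} as an immediate consequence of Theorem \ref{maintheorem}, together with the elementary fact that the unknot has determinant $1$. Suppose $\A$ embeds in the unknot. Unravelling the definition, this means there is an arc in $S^3 - Int(S)$ which, glued to $\A$ along their common endpoints, produces a knot $K \subset S^3$ isotopic to the unknot; thus the unknot is realized as a closure of $\A$. Relative to the fixed longitude $l$ of Fig.~\ref{krebestangle}, this closure is either odd or even, according to the parity of $lk(K,l)$; for the longitude $l$ of Fig.~\ref{krebestangle} this is the same as the parity of the (signed) number of times the completing arc passes through the hole of $S$, and these are the only two options. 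So it is enough to show that the unknot cannot be an \emph{odd} closure of $\A$.

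To that end, I would argue by contradiction. If the unknot were an odd closure of $\A$, then Theorem \ref{maintheorem} would force the determinant of the unknot to be divisible by $3$. But the determinant of the unknot is $1$, which is not divisible by $3$. This contradiction shows that any closure of $\A$ which happens to be the unknot must be even, which is precisely the assertion of Corollary \ref{maincor}.

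Since the deduction above is purely formal, there is no real obstacle in the corollary itself; the entire weight rests on Theorem \ref{maintheorem}, and hence on the homology computations of Sections 2 and 3 showing that the torsion subgroup $T_1(Y_{\A,1})$ of $\h{Y_{\A,1}}$ is nontrivial --- in fact that it has order divisible by $3$. For context, the mechanism by which that torsion yields the divisibility statement is the following: an odd closure $K$ of $\A$ induces the double-branched cover $Y_{\A,1}$ (the condition that $lk(K,l)$ be odd is exactly $\varphi(l) = 1$ in $\mathbb{Z}_2$), so by Theorem \ref{ruberman} the group $T_1(Y_{\A,1})$ injects into $H_1(X_K)$, where $X_K$ is the double cover of $S^3$ branched over $K$; since $|H_1(X_K)| = |\det K|$ for a knot $K$, any prime dividing $|T_1(Y_{\A,1})|$ must divide $|\det K|$. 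That chain of reasoning belongs to the proof of Theorem \ref{maintheorem}; for Corollary \ref{maincor} one simply invokes the theorem, so I expect the corollary to require no further work once the theorem is in hand.
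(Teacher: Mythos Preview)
Your argument is correct and matches the paper's approach exactly: the paper simply states that Corollary~\ref{maincor} ``follows immediately'' from Theorem~\ref{maintheorem}, and your contradiction via $\det(\text{unknot})=1$ is precisely the intended deduction. The additional contextual paragraph recapitulating the mechanism behind Theorem~\ref{maintheorem} is accurate but unnecessary for the corollary itself.
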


\begin{figure}
\begin{center}
\includegraphics[height=1.3in]{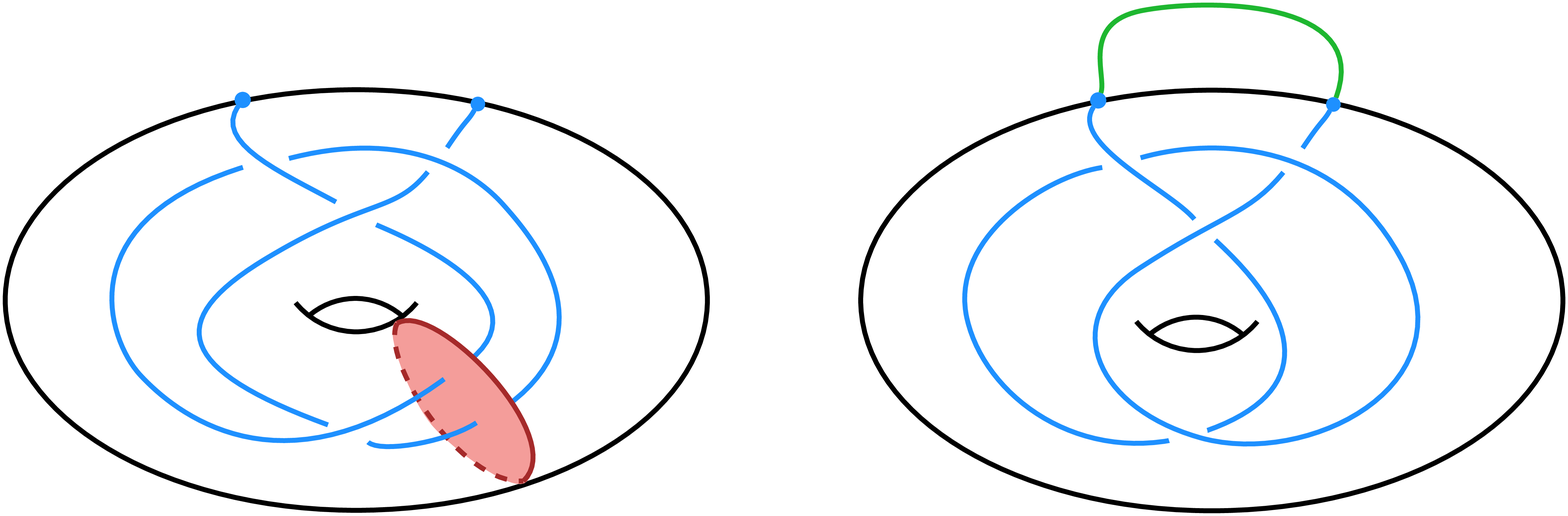}
\caption{The disk in $S$ where we perform a meridional twist, and the genus-1 tangle which results from the twist.}\label{krebestwist}
\end{center}
\end{figure}

Before further discussion, we need to make a remark about the definition of genus-1 tangles.
\begin{remark} \label{embeddingremark}
Note that when defining genus-1 tangles, we fix a standardly embedded solid torus $S$ in the 3-sphere.  The reason that we restrict to a fixed embedding is that there are many ways to re-embed a solid torus inside $S^3$.

For instance, if we perform a meridional twist on $S$ along the disk indicated in Fig. \ref{krebestwist}, the image of $\A$ under this twist can be easily seen to embed in an unknot via the exterior arc pictured in Fig. \ref{krebestwist}.  Thus it is necessary to specify the embedding of $\storus$ in the case of genus-1 tangles, and we restrict to a fixed standardly embedded solid torus in our definition.
\end{remark}


\section{Surgery descriptions for double-branched covers}\label{}

For the purposes of this paper, we restrict our attention to double-branched covers of $S$ branched over $\A$.  Since a homomorphism $\varphi: \h{S-\A} \rightarrow \mathbb{Z}_2$ must map the specified longitude $l$ to either zero or one, there are two double-branched covers, $\even$ and $\odd$.  We call $\even$ the even double-branched cover because it is induced by all even closures of $\A$ (with respect to $l$).  Similarly, since $\odd$ is induced by all odd closures of $\A$, we call it the odd double-branched cover.

\begin{figure}
 \includegraphics[height=1in]{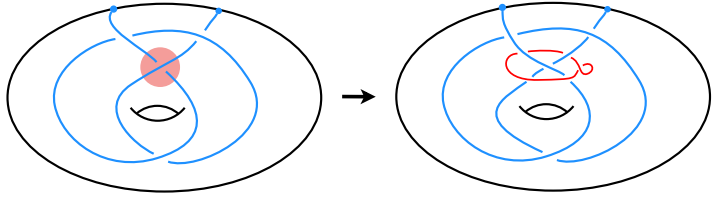}\caption{We perform surgery around a crossing, following \cite{rolf}.}\label{surgery}
\end{figure}

In this section, we adapt Rolfsen's technique to find surgery descriptions for these double-branched covers.

Following \cite{rolf}, we perform surgery near a carefully selected crossing (see Fig. \ref{surgery}) in such a way that after surgery we may essentially unwind $\A$ (via sliding its endpoints around the boundary in the complement of $l$) so that it looks trivial.  This process, illustrated in Fig. \ref{unwind}, results in a nice surgery description of $\A$ inside $S$.  Note that in the last drawing of Fig. \ref{unwind}, we choose to draw this surgery description in a particular way because it makes constructing branched covers easier.

\begin{figure}
 \includegraphics[width=4.85in]{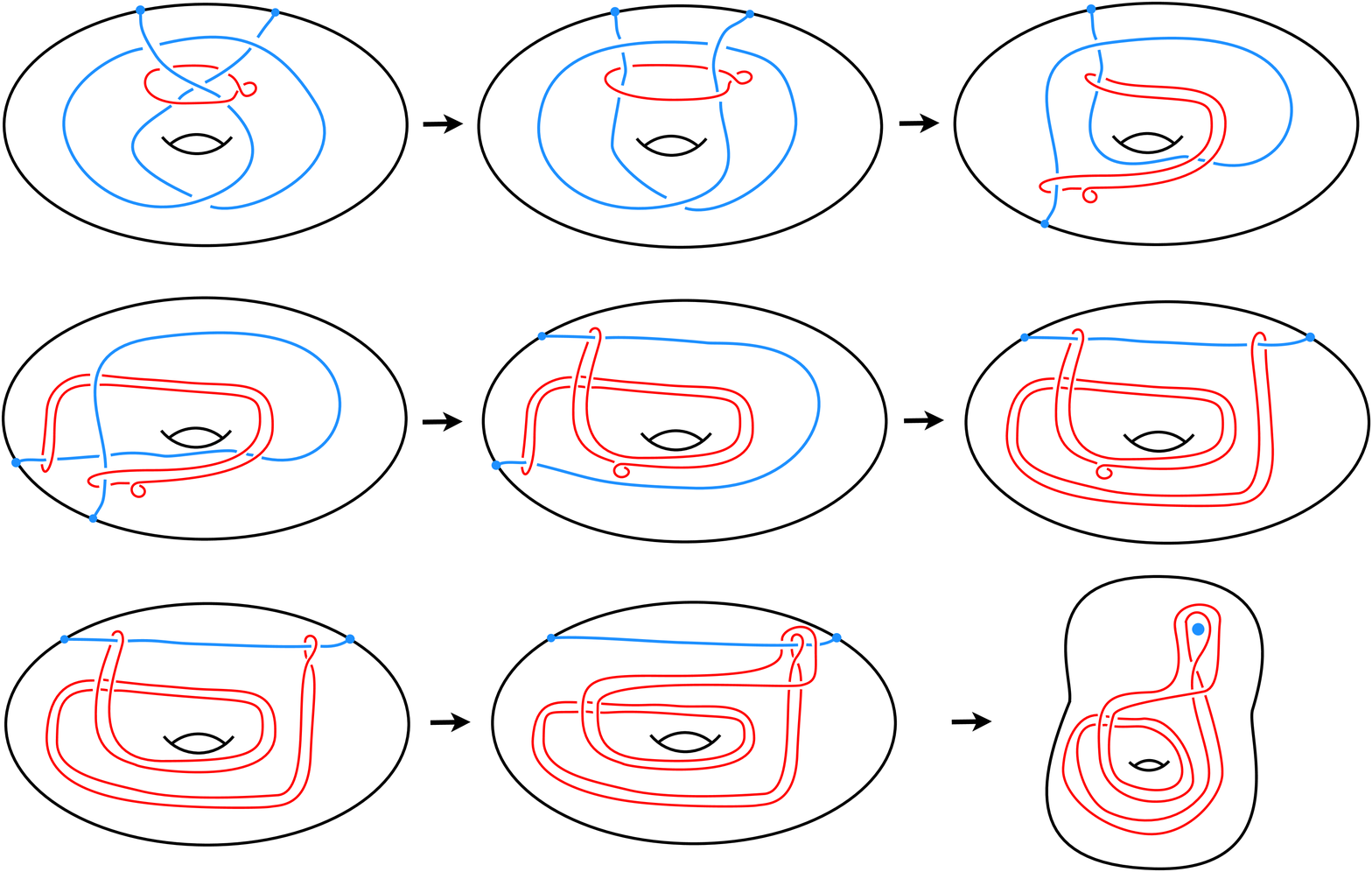}\caption{Unwinding the genus-1 tangle $\A$ to make it look trivial. The surgery curve is always given the blackboard framing.}\label{unwind}
\end{figure}

Now we construct the odd cover, $\odd$.  Construction is dictated by the homomorphism $\varphi: \h{S-\A} \rightarrow \mathbb{Z}_2$ corresponding to the cover.  If $\varphi$ maps a generator of $\h{S-\A}$ to a non-zero element, then we cut the solid torus along a disk transverse to that generator. Thus, we have two cuts to make in the case of the odd cover.

First, we cut $S$ along a disk which is transverse to the meridian $m$ of $\A$ and whose boundary is made up of the unwound genus-1 tangle $\A$ together with an arc in $\partial S$.  Then, because $\varphi$ sends $l$ to one, we cut $S$ along a disk which is transverse to $l$ and whose boundary is contained in $\partial S$. We then take two copies of the resulting manifold and glue them together carefully to obtain a surgery description for $\odd$.  This process is illustrated in Fig. \ref{constructodd}.

Although it is not needed in the proof of Theorem \ref{maintheorem}, we also give a surgery description of the even double-branched cover $\even$ in Fig. \ref{constructeven}.

\begin{figure}
\includegraphics[width=3.9in]{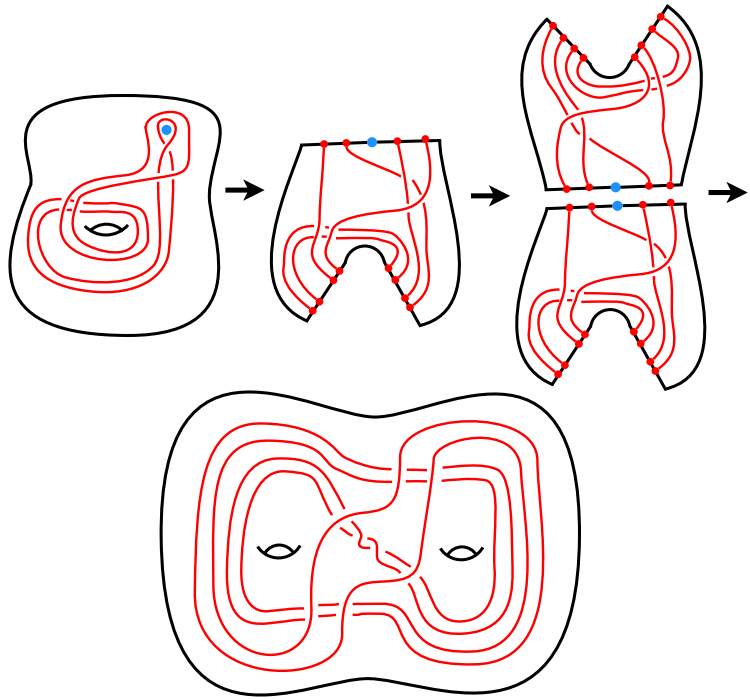}\caption{Constructing the odd double-branched cover $\odd$ of $S$ branched over $\A$.}\label{constructodd}
\end{figure}

\begin{figure}
\includegraphics[width=4.1in]{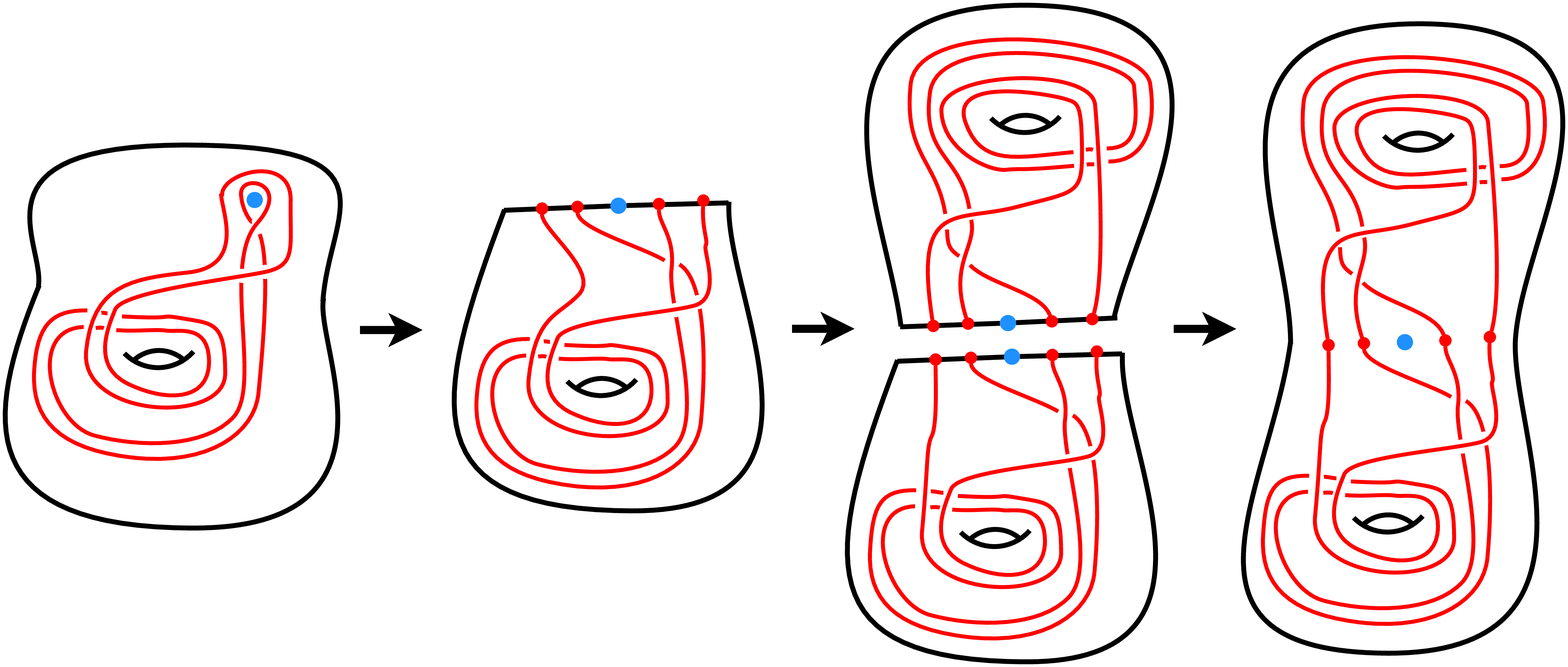}\caption{Obtaining a surgery description of $\even$.}\label{constructeven}
\end{figure}


\section{Homology of the covers}

Now we compute the homology of the odd double-branched cover.  From Fig. $\ref{constructodd}$ we see that the surgery description for $\odd$ is given by a 2-component surgery link inside a genus-2 handlebody.  We denote the components of the surgery link by $\sigma$ and $\tau$, and let $H$ denote the genus-2 handlebody.  The complement of $H$ in $S^3$ is a neighborhood of the handcuff graph $G$, pictured in Fig. \ref{oddhomology}, which is composed of loops $\alpha_1$ and $\alpha_2$ joined together by an arc.  Then the complement of $\sigma \cup \tau$ in $H$ can be viewed as the complement of $\sigma \cup \tau \cup G$ in $S^3$.  One can see that $\h{S^3 - (\sigma \cup \tau \cup G)}$ is isomorphic to $\h{S^3-(\sigma \cup \tau \cup \alpha_1 \cup \alpha_2)}$ which is free on four generators: the meridians of $\sigma$, $\tau$, $\alpha_1$, and $\alpha_2$.

\begin{figure}
\labellist
\small\hair 2pt
\pinlabel $\sigma$ at 108 414
\pinlabel $\tau$ at 527 424
\pinlabel $\alpha_1$ at 0 245
\pinlabel $\alpha_2$ [t] at 646 259
\pinlabel $G$ at 671 116
\endlabellist
\includegraphics[width=3in]{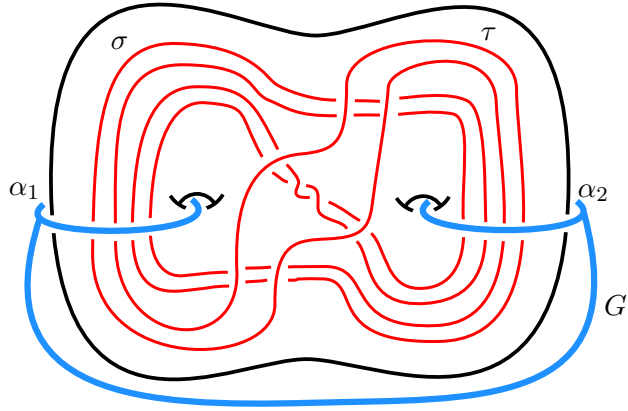}\caption{A surgery description of $Y_{\A,1}$.}\label{oddhomology}
\end{figure}

Completing the surgery by gluing in two solid tori according to $\sigma$ and $\tau$ introduces two relations on these four generators, which are given by the linking numbers of $\sigma$ and $\tau$ with each of $\sigma$, $\tau$, $\alpha_1$, and $\alpha_2$. Then $\h{\odd}$ is isomorphic to $\h{S^3-(\sigma \cup \tau \cup \alpha_1 \cup \alpha_2)}$ modulo these two relations, and we can get a presentation for $\h{\odd}$ using linking numbers.  Thus, we have the following presentation matrix for $\h{\odd}$:
$$\bordermatrix{\text{}&\sigma & \tau & \alpha_1 & \alpha_2 \cr
                \sigma & 1 &  2  & 0 & 0 \cr
                 \tau & 2  &  1 & 0 & 0 \cr}.$$

Using row and columns operations we obtain a simpler presentation matrix:
$$\bordermatrix{\text{}&\sigma & \tau & \alpha_1 & \alpha_2 \cr
                \sigma & 1 &  0  & 0 & 0 \cr
                 \tau & 0  &  3 & 0 & 0 \cr}.$$
Therefore, $\h{\odd}=\mathbb{Z} \oplus \mathbb{Z} \oplus \mathbb{Z}_3$ and we are now able to prove the main theorem.  Corollary \ref{maincor} follows immediately.

\begin{proof}[Proof of Theorem \ref{maintheorem}]
Let $K$ be an odd closure of $\A$, and let $X_K$ denote the double cover of $S^3$ branched over $K$.  Since $K$ is an odd closure of $\A$, it induces a restriction from $X_K$ to $\odd$.  Then according to Theorem \ref{ruberman}, we have that $T_1(\odd) = \mathbb{Z}_3 \hookrightarrow \h{X_K}$.  Thus $|T_1(\odd)|=3$ divides $|\h{X_K}| = \det(K)$.
\end{proof}

We are unable to use this method to restrict all closures of $\A$ because $\even$ has a torsion-free first homology group.  Indeed, the statement in Remark \ref{embeddingremark} allows us to see that the even cover does embed in $S^3$ and so must have torsion-free first homology.  Of course, this can be verified by deriving a presentation for the homology of $\even$ using the procedure above.


\begin{thebibliography}{99}



\bibitem[1]{kr} D. A. Krebes, An obstruction to embedding 4-tangles in links, {\it J. Knot Theory and its Ramifications} \textbf{8} (1999) 321-352.

\bibitem[2] {psw} J.H. Przytycki, D.S. Silver and S.G. Williams, 3-manifolds, tangles, and persistent invariants, 
{\em Math. Proc. Camb. Phil. Soc.} {\bf 139} (2005) 291--306.

\bibitem[3]{rolf} D. Rolfsen, {\it Knot and Links} (Publish or Perish, Berkeley, CA, 1976).

\bibitem[4]{rub} D. Ruberman,  Embedding tangles in links, {\em J. Knot Theory and its Ramifications} {\bf 9} (2000) 523-530.


\end{thebibliography}
\end{document}